\newcommand{\N}{{\mathbb{N}}}
\newcommand{\cS}{{\mathcal{S}}}
\newcommand{\cW}{{\mathcal{W}}}
\renewcommand{\leq}{\leqslant}
\renewcommand{\geq}{\geqslant}
\newtheorem{thm}{Theorem}
\newtheorem{cor}[thm]{Corollary}
\newtheorem{prop}[thm]{Proposition}
\newtheorem{lem}[thm]{Lemma}
\theoremstyle{definition}
\theoremstyle{remark}
\newtheorem{rem}[thm]{Remark}
\begin{document}
\vspace*{-1.5cm}

\title{Coxeter groups and automorphisms}
\author{Meinolf Geck and Lacrimioara Iancu}
\address{IAZ -- Lehrstuhl f\"ur Algebra, Universit\"at Stuttgart, 
Pfaffenwaldring 57, D--70569 Stuttgart, Germany} 
\email{meinolf.geck@mathematik.uni-stuttgart.de}
\email{iancu@mathematik.uni-stuttgart.de}

\subjclass{}
\begin{abstract} Let $(W,S)$ be a Coxeter system and $\Gamma$
be a group of automorphisms of $W$ such that $\gamma(S)=S$ for all
$\gamma \in \Gamma$. Then it is known that the group of
fixed points $W^\Gamma$ is again a Coxeter group with a canonically
defined set of generators. The usual proofs of this fact rely
on the reflection representation of $W$. Here, we give a proof which 
only uses the combinatorics of reduced expressions in $W$. As a by-product, 
this shows that the length function on $W$
restricts to a weight function on $W^\Gamma$.
\end{abstract}

\maketitle

\date{}

\pagestyle{myheadings}
\markboth{Geck and Iancu}{Coxeter groups and automorphisms}

Let $(W,S)$ be a Coxeter system where $S$ is finite. Let $l\colon W
\rightarrow \N_0$ be the corresponding length function. Let $\Gamma$ be
a group of automorphisms of $W$ such that $\gamma(S)=S$ for all $\gamma
\in \Gamma$. Then we have $l(\gamma(w))=l(w)$ for all $w\in W$. Let 
\[W^\Gamma:= \{w\in W\mid \gamma(w)=w\mbox{ for all $\gamma\in \Gamma$}\}\]
be the group of fixed points. For any subset $I\subseteq S$ let 
$W_I\subseteq W$ be the corresponding parabolic subgroup. If $W_I$ is 
finite, let $w_I\in W_I$ be the longest element. Let $\bar{S}$ be the set 
of $\Gamma$-orbits $I$ on $S$ such that $W_I$ is finite; note that 
$w_I\in W^\Gamma$ for $I\in\bar{S}$. The purpose of this note is to 
provide a short proof of the following (known) result. 

\begin{thm} \label{thm1} The pair $(W^\Gamma,\{w_I\mid I\in \bar{S}\})$ is 
a Coxeter system. Let $l_\Gamma\colon W^\Gamma\rightarrow \N_0$ be the 
corresponding length function. Then, for any $w,w'\in W^\Gamma$, we have 
$l(ww')=l(w)+ l(w')$ if and only if $l_\Gamma(ww')=l_\Gamma(w)+l_\Gamma(w')$.
\end{thm}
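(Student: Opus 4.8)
The plan is to deduce the theorem from two assertions: that the pair $(W^\Gamma,\{w_I\mid I\in\bar S\})$ satisfies the Exchange Condition — hence, by the standard characterization of Coxeter systems among groups generated by involutions, is a Coxeter system in which $l_\Gamma$ is the word length in the $w_I$ — and that $l$ restricted to $W^\Gamma$ is additive along $l_\Gamma$-reduced words, i.e.\ is the weight function $\varphi$ with $\varphi(w_I)=l(w_I)$. The final equivalence of the theorem then follows from the general and elementary fact that a weight function $\varphi$ which is positive on generators satisfies $\varphi(xy)=\varphi(x)+\varphi(y)$ if and only if $l_\Gamma(xy)=l_\Gamma(x)+l_\Gamma(y)$: if there is cancellation one writes $x=x_1r$, $y=r^{-1}y_1$ reducedly with $r\neq 1$ and estimates $\varphi(xy)\le\varphi(x)+\varphi(y)-2\varphi(r)<\varphi(x)+\varphi(y)$.

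First I would set up the descent mechanism. For $w\in W^\Gamma\setminus\{1\}$ the left descent set $\mathcal{L}(w)=\{s\in S\mid l(sw)<l(w)\}$ is nonempty and $\Gamma$-stable, hence a union of $\Gamma$-orbits; for any orbit $I\subseteq\mathcal{L}(w)$ the parabolic $W_I$ is finite, because the minimal-length representative of the coset $W_Iw$ then has full left descent set in $W_I$, which forces $W_I$ to have a longest element. So $I\in\bar S$, and since $w_I$ is the longest element of $W_I$ we get $w=w_I\cdot(w_Iw)$ with $l(w)=l(w_I)+l(w_Iw)$ and $w_Iw\in W^\Gamma$. Induction on $l$ then gives $W^\Gamma=\langle w_I\mid I\in\bar S\rangle$ and shows that every $w\in W^\Gamma$ admits an expression $w=w_{I_1}\cdots w_{I_k}$ with $l(w)=\sum_i l(w_{I_i})$; call such an expression $l$-reduced.

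The core is a Folding Lemma, proved by induction on $l(w)$: (i) if $w=w_{I_1}\cdots w_{I_k}$ is $l$-reduced and $I\in\bar S$ with $I\subseteq\mathcal{L}(w)$, then $w_Iw=w_{I_1}\cdots\widehat{w_{I_j}}\cdots w_{I_k}$ for some $j$, and this word is again $l$-reduced; (ii) any two $l$-reduced expressions of a given $w$ have the same number of factors, so that $l_\Gamma(w)$ may be defined as this number. Granting the lemma, everything falls into place. If $I\not\subseteq\mathcal{L}(w)$ then $\mathcal{L}(w)\cap I=\emptyset$, hence $l(w_Iw)=l(w_I)+l(w)$ and $w_Iw_{I_1}\cdots w_{I_k}$ is $l$-reduced with $k+1$ factors, so $l_\Gamma(w_Iw)=k+1$; combined with (i) this is exactly the Exchange Condition relative to $l_\Gamma$, and a short argument — a minimal word in the $w_I$ that were not $l$-reduced could be shortened by applying the right-handed form of (i) at the first place of cancellation — identifies $l_\Gamma$ with the word length. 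Finally (i) says $l(w)=\sum_i l(w_{I_i})$ for every $l_\Gamma$-reduced expression, which is the claim that $l|_{W^\Gamma}=\varphi$; it also forces $l(w_I)=l(w_J)$ whenever $w_Iw_J$ has odd order, so $\varphi$ is indeed a weight function.

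The main obstacle is the Folding Lemma, and within it the case where $\mathcal{L}(w)$ contains two distinct orbits $I,J$. Here one runs the classical alternation: peel off $w_I$, then $w_J$, then $w_I$, and so on, each step remaining $l$-reduced by the elementary identity that $l(vx)=l(v)+l(x)$ for $v\in W_K$ holds precisely when $\mathcal{L}(x)\cap K=\emptyset$. One must show that this alternation cannot stall before length $m_0:=\operatorname{ord}(w_Iw_J)$: if it stalled, the partial alternating product $v_r=w_Iw_J\cdots$ ($r<m_0$ factors) would have $\mathcal{L}(v_r)\cap(I\cup J)=I\cup J$, which is excluded by a self-contained analysis of the finite dihedral group $\langle w_I,w_J\rangle$ — namely that alternating $\Gamma$-expressions of its elements are $l$-reduced, that the $W$-left-descents of $v_r$ meet $I\cup J$ in exactly the orbit of the first letter for $r<m_0$, and that $w_{I\cup J}$ is its longest element. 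The same dihedral analysis yields $l(w_I)=l(w_J)$ on odd bonds, by comparing the two alternating $l$-reduced expressions of $w_{I\cup J}$, and lets one match up the two $l$-reduced expressions of $w$ to obtain (ii). The delicate feature throughout is that $w_I$ need not be a reflection of $W$, so the Strong Exchange Condition is unavailable and one must argue entirely with parabolic coset decompositions; making the two nested inductions mesh is where the real work lies.
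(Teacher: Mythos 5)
Your architecture matches the paper's up to the decisive step: the descent/coset lemma, the dihedral analysis of $\langle w_I,w_J\rangle$, the equality of the number of factors in any two $l$-reduced expressions (your (ii), proved by the Matsumoto--Tits alternation), and the derivation of the final length statement from additivity of $l$ along reduced words are all present in the paper, and your sketches of these are sound. The gap is in part (i) of your ``Folding Lemma'', i.e.\ the Exchange Condition itself. The alternation you describe (peel off $w_I$, then $w_J$, \dots, and show it cannot stall before $m_0$ factors) only establishes $w=w_{I\cup J}\bullet x$, hence that $w_Iw$ \emph{has some} $l$-reduced expression with $k-1$ factors; it does not show that such an expression arises from the \emph{given} expression $w_{I_1}\cdots w_{I_k}$ by deleting one factor, which is what the Exchange Condition demands. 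The natural inductions do not close: peeling $w_{I_1}$ from the left leads you to compute $w_I(w_{I_1}w)$ rather than $w_Iw$ (and $w_Iw_{I_1}\neq w_{I_1}w_I$ in general), while peeling from the right leaves the case $I\not\subseteq\mathcal{L}(w_{I_1}\cdots w_{I_{k-1}})$, which cannot be resolved from the descent combinatorics alone. You acknowledge this (``making the two nested inductions mesh is where the real work lies''), but that is exactly the work that is missing, and it is not routine: the hat-form exchange for a group generated by involutions is not a formal consequence of the properties you have established.

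The paper closes this gap with an idea absent from your proposal: it imports the Exchange Condition of the ambient system $(W,S)$. Concatenating reduced $S$-expressions of the blocks $w_{J_i}$ gives a reduced $S$-expression of $w$; for $s\in I$ with $l(sw)\leq l(w)$, the exchange in $(W,S)$ deletes a single letter inside some block, yielding $sw=w_{J_1}\cdots w_{J_{i-1}}x\,w_{J_{i+1}}\cdots w_{J_p}$ with $x\in W_{J_i}$, hence $z^{-1}sz=xw_{J_i}\in W_{J_i}$ for $z=w_{J_1}\cdots w_{J_{i-1}}$; applying $\Gamma$ and using $W_{J_i}^\Gamma=\{1,w_{J_i}\}$ forces $z^{-1}w_Iz=w_{J_i}$, so the \emph{entire block} drops and the hat form follows. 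To stay entirely inside the combinatorics of the $w_I$, as you propose, you would instead have to upgrade the alternation to the full word property (tracking the expressions themselves, not just their lengths) and then invoke Tits' characterization of Coxeter presentations --- a substantially longer route. As written, your proof of the central claim is incomplete.
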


If $W$ is finite, this is due to Steinberg \cite[\S 11]{Stein}; for
general $(W,S)$, see H\'ee \cite{hee} or Lusztig \cite[Appendix]{Lusztig03}.
The proofs in [{\em loc.\ cit.}] rely on properties of the reflection
representation of $W$. The proof that we shall give here is based on notes 
of a course on Coxeter groups given by the second-named author at EPFL 
in 2004. It proceeds somewhat more directly by using only the 
combinatorics of reduced expressions of elements in $W$. 

A key role is played by dihedral groups and distinguished coset 
representatives with respect to a parabolic subgroup of $W$ (see, for 
example, \cite[\S 2.1]{gepf}). Also recall that a parabolic subgroup $W_I$ 
is finite if and only if there exists an element $u\in W_I$ such that 
$l(su)<l(u)$ for all $s\in I$, in which case we have $u=w_I$; note also that
$w_I^2=1$. (For these facts see, for example, \cite[\S 1.5]{gepf}.)

\begin{lem}[Cf.\ \protect{\cite[3.4]{hee}, \cite[A.1(a)]{Lusztig03}}] 
\label{lem1} Let $w\in W^\Gamma$. Then we can write $w=w_{J_1}\cdots w_{J_r}$
where $J_i\in \bar{S}$ and $l(w)=l(w_{J_1})+ \ldots +l(w_{J_r})$. Furthermore, 
if $s\in S$ is such that $l(sw)<l(w)$, then we can choose $J_1$ such 
that $s\in J_1$. In particular, $W^\Gamma=\langle w_I \mid I\in 
\bar{S}\rangle$.
\end{lem}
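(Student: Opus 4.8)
The plan is to argue by induction on $l(w)$. If $w=1$ we take the empty product ($r=0$), so assume $w\neq 1$. Since every nontrivial element of $W$ admits some $s\in S$ with $l(sw)<l(w)$, the first assertion is subsumed by the ``furthermore'' part, so I would fix from the outset such an $s$ and let $J_1\subseteq S$ be its $\Gamma$-orbit; note $J_1\neq\emptyset$. The first step is to promote the single inequality $l(sw)<l(w)$ to a descent by the whole orbit: for $\gamma\in\Gamma$ we have $l(\gamma(s)w)=l(\gamma(s)\gamma(w))=l(\gamma(sw))=l(sw)<l(w)$, using $w\in W^\Gamma$ and the $\Gamma$-invariance of $l$; hence $l(s'w)<l(w)$ for every $s'\in J_1$.

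The crux is the second step: to show $W_{J_1}$ is finite with longest element $w_{J_1}$, and to peel $w_{J_1}$ off the left of $w$. Write $w=u\cdot d$, where $d$ is the unique element of minimal length in the coset $W_{J_1}w$ and $u=wd^{-1}\in W_{J_1}$; then $l(w)=l(u)+l(d)$. For $s'\in J_1$ one has $s'u\in W_{J_1}$ and $s'w=(s'u)\cdot d$, with $d$ still the minimal-length element of $W_{J_1}(s'w)=W_{J_1}w$; by uniqueness of this type of decomposition, $l(s'w)=l(s'u)+l(d)$. Comparing with $l(w)=l(u)+l(d)$ and using the first step, we obtain $l(s'u)<l(u)$ for all $s'\in J_1$. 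By the recalled criterion for finiteness of a parabolic subgroup, this forces $W_{J_1}$ to be finite with $u=w_{J_1}$; in particular $J_1\in\bar S$ and $s\in J_1$, and $w=w_{J_1}\,d$ with $l(w)=l(w_{J_1})+l(d)$.

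It remains to apply the inductive hypothesis to $d$. Since $w_{J_1}^2=1$ we have $d=w_{J_1}w$, and as $w_{J_1},w\in W^\Gamma$ (recall $w_I\in W^\Gamma$ for $I\in\bar S$) it follows that $d\in W^\Gamma$; moreover $l(d)=l(w)-l(w_{J_1})<l(w)$ because $w_{J_1}\neq 1$ (as $J_1\neq\emptyset$). By induction, $d=w_{J_2}\cdots w_{J_r}$ with $J_i\in\bar S$ and $l(d)=l(w_{J_2})+\cdots+l(w_{J_r})$, and therefore $w=w_{J_1}w_{J_2}\cdots w_{J_r}$ with $l(w)=l(w_{J_1})+\cdots+l(w_{J_r})$, as desired. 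The final assertion $W^\Gamma=\langle w_I\mid I\in\bar S\rangle$ is then immediate.

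I expect the main obstacle to be the second step, and specifically the identification of the descents of $w$ lying in $J_1$ with the descents of its $W_{J_1}$-component $u$ via the coset decomposition $w=u\cdot d$; once this is in place, the recalled finiteness criterion does the work of producing $w_{J_1}$. The remainder is routine bookkeeping with the $\Gamma$-invariance of $l$ and the induction on length.
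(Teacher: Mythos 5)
Your proof is correct and follows essentially the same route as the paper: propagate the descent $l(sw)<l(w)$ over the whole $\Gamma$-orbit $J_1$ using $\Gamma$-invariance of $l$, decompose $w=u\bullet d$ with $u\in W_{J_1}$ and $d$ the distinguished coset representative, deduce $l(s'u)<l(u)$ for all $s'\in J_1$ to force $W_{J_1}$ finite with $u=w_{J_1}$, and conclude by induction on $l(w)$ applied to $d=w_{J_1}w\in W^\Gamma$. The only cosmetic difference is that you spell out the additivity $l(s'w)=l(s'u)+l(d)$ via uniqueness of the coset decomposition, which the paper simply cites as a standard property of distinguished coset representatives.
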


\begin{proof} Induction on $l(w)$. If $l(w)=0$, then $w=1$ and there is 
nothing to prove. Now let $l(w)>0$ and $s\in S$ be such that $l(sw)<l(w)$.
Let $J_1$ be the $\Gamma$-orbit of~$s$ ; since $l(sw)<l(w)$, we
have $l(\gamma(s)w)=l(\gamma(sw))<l(w)$ for all $\gamma\in \Gamma$
and, hence, $l(tw)<l(w)$ for all $t\in J_1$.
Let now $X_{J_1}=\{x\in W\mid l(tx)>l(x) 
\mbox{ for all $t\in J_1$}\}$ be the set of distinguished right coset 
representatives of $W_{J_1}$ in $W$. We can write $w=ux$ where $u\in 
W_{J_1}$, $x\in X_{J_1}$ and $l(w)=l(u)+l(x)$. 
Since $tu\in W_{J_1}$, 
we have $l(tux)=l(tu)+l(x)$ for all $t\in J_1$. So we conclude that 
$l(tu)<l(u)$ for all $t\in J_1$. Hence, $W_{J_1}$ must be finite and 
$u=w_{J_1}\in W^\Gamma$. But then we also have $x\in W^\Gamma$ and we 
can continue with $x$ by induction. 
\end{proof}

In what follows, to simplify notation, we shall write $w=x\bullet y$
if $w,x,y\in W$ are such that $w=xy$ and $l(w)=l(x)+l(y)$.  Thus, in
the setting of Lemma~\ref{lem1}, we can write $w=w_{J_1}\bullet \ldots 
\bullet w_{J_r}$.

\begin{rem} \label{rem1} Let $I,J\in \bar{S}$ and assume that $I\neq J$.
Let $K:=I\cup J$. Applying Lemma~\ref{lem1} to $W_K$ shows that 
$W_K^\Gamma=\langle w_I,w_J\rangle$ is a dihedral group. Suppose that there 
exists some $u\in W_K$ such that $l(su)<l(u)$ for all $s\in K$. Then 
$W_K$ is finite and $u=w_K\in W_K^\Gamma$. Being a dihedral group, the 
order of $W_K^\Gamma$ is $2m$ for some $m\in \N$. The elements of 
$W_K^\Gamma$ are products of the form $w_Iw_Jw_I \cdots$ or 
$w_Jw_Iw_J \cdots$, with at most $m$ factors; furthermore, two such
products (one starting with $w_I$ and one starting with $w_J$) are equal 
if and only if there are exactly $m$ factors in each of them. This also 
shows that $l(y)\leq \frac{m}{2}(l(w_I)+l(w_J))$ for all $y\in W_K^\Gamma$. 
We now claim that 
\[w_K=\underbrace{w_I\bullet w_J\bullet w_I\bullet \ldots}_{\text{$m$ terms}}
= \underbrace{w_J\bullet w_I\bullet w_J \bullet \ldots}_{\text{$m$ terms}} 
\quad \mbox{where} \quad l(w_K)=\frac{m}{2}(l(w_I)+ l(w_J)).\]
Indeed, by Lemma~\ref{lem1}, we can write $w_K=w_I\bullet w_J \bullet 
w_I\bullet \ldots$ with, say $p\geq 1$, terms, and also $w_K=w_J\bullet w_I 
\bullet w_J\bullet \ldots$ with, say $q\geq 1$, terms. Since $l(w_K)\leq 
\frac{m}{2}(l(w_I)+ l(w_J))$, we must have $p\leq m$ and $q\leq m$. But 
then the two products $w_K=w_I\bullet w_J\bullet w_I\bullet\ldots=w_J
\bullet w_I\bullet w_J\bullet \ldots$ can only be equal if there are 
exactly $m$ factors in both sides. Thus, we have $p=q=m$, as required.  
\end{rem}

\begin{lem} \label{lem2} Let $w\in W^\Gamma$ and assume that we have two
expressions 
\[w=w_{J_1}\bullet \ldots \bullet w_{J_r}=w_{I_1}\bullet \ldots \bullet 
w_{I_p}\]
where $J_i,I_i\in \bar{S}$. Then $r=p$.
\end{lem}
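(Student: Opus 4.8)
The plan is to argue by induction on $l(w)$. If $l(w)=0$ then $w=1$; since each $w_{J_i}$ is the longest element of a non-trivial parabolic subgroup, $l(w_{J_i})\geq 1$, so $l(w)=l(w_{J_1})+\cdots+l(w_{J_r})$ forces $r=0$, and likewise $p=0$. Assume now $l(w)>0$. The first point I would record is that both $J_1$ and $I_1$ lie in the left descent set of $w$: writing $w=w_{J_1}\bullet w'$ with $w'=w_{J_2}\bullet\cdots\bullet w_{J_r}$, every $t\in J_1$ is a left descent of the longest element $w_{J_1}$ of $W_{J_1}$, so $l(tw)\leq l(tw_{J_1})+l(w')=l(w)-1$, and symmetrically $l(tw)<l(w)$ for $t\in I_1$. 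If $J_1=I_1$, multiplying on the left by $w_{J_1}$ (recall $w_{J_1}^2=1$) yields $w_{J_2}\bullet\cdots\bullet w_{J_r}=w_{J_1}w=w_{I_2}\bullet\cdots\bullet w_{I_p}$, two expressions as in Lemma~\ref{lem1} of the strictly shorter element $w_{J_1}w$, so the induction hypothesis gives $r-1=p-1$. Hence it remains to treat the case $J_1\neq I_1$, which I expect to be the crux.

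Here I would first show that the parabolic subgroup $W_K$ with $K:=J_1\cup I_1$ is finite. Write $w=u\bullet z$ with $u\in W_K$ and $z\in X_K$ the distinguished representative of the coset $W_Kw$, so that $l(u'z)=l(u')+l(z)$ for all $u'\in W_K$. For each $t\in J_1\cup I_1$ we have just seen $l(tw)<l(w)$; since $l(tw)=l(tu)+l(z)$, this gives $l(tu)<l(u)$, and by the finiteness criterion recalled above, $W_K$ is finite and $u=w_K$. As $K$ is a union of $\Gamma$-orbits, $W_K$ is $\Gamma$-stable, so $w_K\in W^\Gamma$, hence also $z\in W^\Gamma$, and $l(z)=l(w)-l(w_K)<l(w)$. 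Now I would invoke Remark~\ref{rem1} for the two distinct orbits $J_1,I_1$: it gives that $W_K^\Gamma=\langle w_{J_1},w_{I_1}\rangle$ is dihedral, of order $2m$ say, and that
\[w_K=\underbrace{w_{J_1}\bullet w_{I_1}\bullet w_{J_1}\bullet\cdots}_{m\text{ terms}}=\underbrace{w_{I_1}\bullet w_{J_1}\bullet w_{I_1}\bullet\cdots}_{m\text{ terms}}.\]

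To conclude, fix one expression $z=w_{L_1}\bullet\cdots\bullet w_{L_d}$ as furnished by Lemma~\ref{lem1}. Peeling the leading $w_{J_1}$ off the first normal form of $w_K$ gives $w_{J_1}w_K=\underbrace{w_{I_1}\bullet w_{J_1}\bullet\cdots}_{(m-1)\text{ terms}}$, and since $w=w_K\bullet z$ a short length count shows that
\[w_{J_1}w=(w_{J_1}w_K)\bullet z=\underbrace{w_{I_1}\bullet w_{J_1}\bullet\cdots}_{(m-1)\text{ terms}}\bullet\;w_{L_1}\bullet\cdots\bullet w_{L_d}\]
is again an expression of $w_{J_1}w$ of the type in Lemma~\ref{lem1}, with $(m-1)+d$ terms. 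On the other hand $w_{J_1}w=w_{J_2}\bullet\cdots\bullet w_{J_r}$ has $r-1$ terms and $l(w_{J_1}w)<l(w)$, so the induction hypothesis forces $r-1=(m-1)+d$. Running the symmetric argument with $J_1$ and $I_1$ interchanged (using the second normal form of $w_K$) gives $p-1=(m-1)+d$. Hence $r=p$.

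The step that carries the content is the case $J_1\neq I_1$, and there everything rests on two facts already in hand: the finiteness criterion for parabolic subgroups together with distinguished coset representatives, which produce the factorization $w=w_K\bullet z$ with $z\in W^\Gamma$ shorter than $w$; and Remark~\ref{rem1}, which produces the two dihedral normal forms of $w_K$. The one genuinely new idea is to route \emph{both} given expressions of $w$ through the common left factor $w_K$; after that, induction closes the argument, the only remaining check being that the recombined words are still expressions of the type in Lemma~\ref{lem1} --- which holds because every prefix of a $\bullet$-factorization is again a $\bullet$-factorization.
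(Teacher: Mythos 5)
Your proof is correct and follows essentially the same route as the paper's: induction on $l(w)$, the easy cancellation when $J_1=I_1$, and in the crucial case $J_1\neq I_1$ the factorization $w=w_K\bullet z$ with $K=J_1\cup I_1$ finite, the two dihedral normal forms of $w_K$ from Remark~\ref{rem1}, and cancellation of the leading factor on each side before applying the induction hypothesis to get $r-1=(m-1)+d=p-1$. The only differences are expository (e.g.\ spelling out why $J_1,I_1$ lie in the left descent set and why the recombined word is length-additive), which the paper leaves implicit.
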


\begin{proof} Induction on $l(w)$. If $l(w)=0$, then $w=1$ and there is
nothing to prove. Now assume that $l(w)>0$; then $r\geq 1$ and $p\geq 1$. 
If $I_1=J_1$, then $w':=w_{I_1}w=w_{J_2}\bullet \ldots \bullet w_{J_r}=
w_{I_2}\bullet \ldots \bullet w_{I_p}$. So $r-1=p-1$ by induction. Now 
assume that $I_1\neq J_1$. Let $K:=I_1 \cup J_1$ and $X_K=\{x\in W\mid 
l(sx)>l(x) \mbox{ for all $s\in K$}\}$ be the set of distinguished coset 
representatives of $W_K$ in $W$. We can write $w=u\bullet x$ where 
$u\in W_K$ and $x\in X_K$. We have $l(sw)<l(w)$ for all $s\in K$ and so 
$l(su)<l(u)$ for all $s\in K$. Hence, by Remark~\ref{rem1}, $W_K$ must be 
finite and $u=w_K\in W_K^\Gamma$. Then we also have $x\in W^\Gamma$ and so,
by Lemma~\ref{lem1}, we can write $x=w_{L_1}\bullet \ldots \bullet 
w_{L_q}$ where $L_i\in \bar{S}$. Now consider the identities:
\[w_{J_1}\bullet \ldots \bullet w_{J_r}=w=w_K\bullet x=\underbrace{(w_{J_1}
\bullet w_{I_1}\bullet w_{J_1}\bullet \ldots )}_{\text{$m$ terms}} 
\bullet w_{L_1}\bullet \ldots \bullet w_{L_q}.\]
Cancelling $w_{J_1}$ on the left on both sides, we deduce that 
\[w_{J_2}\bullet \ldots \bullet w_{J_r}=\underbrace{(w_{I_1}\bullet w_{J_1}
\bullet \ldots)}_{\text{$m-1$ terms}} \bullet w_{L_1}\bullet \ldots 
\bullet w_{L_q}.\]
By induction, we conclude that $r-1=(m-1)+q$. Applying the same argument 
to 
\[w_{I_1}\bullet \ldots \bullet w_{I_p}=w=w_K\bullet x=\underbrace{(w_{I_1}
\bullet w_{J_1}\bullet w_{I_1}\bullet \ldots )}_{\text{$m$ terms}} 
\bullet w_{L_1}\bullet \ldots \bullet w_{L_q}.\]
also yields $p-1=(m-1)+q$. Consequently, we obtain $r=p$, as desired. 
\end{proof}

The above proof is inspired by the proof of the Matsumoto--Tits Lemma
in \cite[1.2.2]{gepf}, which in turn follows Tits \cite[Cor.~II.1.12]{Tits1}.

Now let $\lambda\colon W^\Gamma\rightarrow \N_0$ denote the length 
function with respect to the generators $\{w_I\mid I\in\bar{S}\}$. Thus,
the properties in \cite[Chap.~IV, n$^\text{o}$~1.1]{bour} do hold
for $\lambda$ but note, for example, that it is not yet clear that 
$\lambda(w_Iw)\neq \lambda(w)$ for $w\in W^\Gamma$ and $I\in\bar{S}$. 

\begin{lem} \label{lem1a} Let $w\in W^\Gamma$ and $w=w_{J_1}\cdots w_{J_p}$
where $J_i\in \bar{S}$. If $p=\lambda(w)$, then $w=w_{J_1}\bullet \ldots
\bullet w_{J_p}$. 
\end{lem}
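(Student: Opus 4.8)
The plan is to argue by induction on $p=\lambda(w)$, the cases $p\le 1$ being immediate (if $p=0$ then $w=1$, and if $p=1$ the assertion $w=w_{J_1}\bullet$ merely says $w=w_{J_1}$). So suppose $p\ge 2$ and set $w'':=w_{J_1}w=w_{J_2}\cdots w_{J_p}$. First I would observe that subadditivity of $\lambda$ gives $\lambda(w)\le 1+\lambda(w'')$, while the displayed expression for $w''$ gives $\lambda(w'')\le p-1$; hence $\lambda(w'')=p-1$ and the inductive hypothesis applies to $w''$, yielding $w''=w_{J_2}\bullet\ldots\bullet w_{J_p}$. It therefore remains to prove that $l(w)=l(w_{J_1})+l(w'')$, i.e.\ that $w=w_{J_1}\bullet w''$.

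For this I would pass to the parabolic subgroup $W_{J_1}$ (finite, as $J_1\in\bar S$) and its set $X_{J_1}$ of distinguished coset representatives, writing $w=u\bullet x$ with $u\in W_{J_1}$ and $x\in X_{J_1}$. Since $\gamma(J_1)=J_1$ for all $\gamma\in\Gamma$, each $\gamma$ preserves both $W_{J_1}$ and $X_{J_1}$, so uniqueness of the decomposition $w=ux$ forces $u,x\in W^\Gamma$. As $J_1$ is a single $\Gamma$-orbit, Lemma~\ref{lem1} applied to the Coxeter system $(W_{J_1},J_1)$ gives $W_{J_1}^\Gamma=\langle w_{J_1}\rangle=\{1,w_{J_1}\}$ (using $w_{J_1}^2=1$), whence $u\in\{1,w_{J_1}\}$. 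If $u=w_{J_1}$, then $w=w_{J_1}\bullet x$ and $x=w_{J_1}w=w''$, so $w=w_{J_1}\bullet w''$ and we are done.

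The hard part is to exclude the case $u=1$, i.e.\ $w\in X_{J_1}$. In that case $l(w_{J_1}w)=l(w_{J_1})+l(w)$, so $w''=w_{J_1}\bullet w$; and choosing a $\bullet$-factorization $w=w_{M_1}\bullet\ldots\bullet w_{M_k}$ supplied by Lemma~\ref{lem1}, I would splice to obtain the $\bullet$-factorization $w''=w_{J_1}\bullet w_{M_1}\bullet\ldots\bullet w_{M_k}$ with $k+1$ terms. Comparing with $w''=w_{J_2}\bullet\ldots\bullet w_{J_p}$ and invoking Lemma~\ref{lem2} gives $k+1=p-1$, so $w=w_{M_1}\cdots w_{M_k}$ exhibits $w$ as a product of $p-2$ generators, contradicting $\lambda(w)=p$. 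Hence $u=1$ cannot occur and the induction closes. I expect the only genuine subtlety to be this final step: recognizing that Lemma~\ref{lem2} (invariance of the number of $\bullet$-factors) is exactly the tool that converts ``$w$ has a short $\bullet$-factorization'' into ``$\lambda(w)<p$''. Everything else — the $\lambda$-squeeze, the $\Gamma$-invariance of the parabolic decomposition, and the identification $W_{J_1}^\Gamma=\{1,w_{J_1}\}$ — is routine.
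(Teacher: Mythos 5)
Your proof is correct and follows essentially the same route as the paper's: induction on $p$, the inductive hypothesis applied to $w_{J_1}w=w_{J_2}\cdots w_{J_p}$, and Lemma~\ref{lem2} to kill the bad case by showing it would force $\lambda(w)\leq p-2$. The only cosmetic difference is that the paper splits cases according to whether $w'=w_{J_1}w$ has a descent in $J_1$ (using the refinement in Lemma~\ref{lem1} to pull a factor $w_{J_1}$ to the front of a $\bullet$-factorization of $w'$), whereas you split according to the $W_{J_1}$-component $u\in W_{J_1}^\Gamma=\{1,w_{J_1}\}$ of $w=u\bullet x$ and then splice a factorization of $w$ behind $w_{J_1}$ — the two dichotomies coincide.
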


\begin{proof} Induction on $p$. If $p=0$ or $1$, then the assertion is clear. 
Now assume that $p\geq 2$ and set $w':=w_{J_2}\cdots w_{J_p}\in W^\Gamma$.
Then $\lambda(w')=p-1$ and so, by induction, $w'=w_{J_2}\bullet \ldots 
\bullet w_{J_r}$. Now we distinguish two cases. If $l(sw')>l(w')$ for 
all $s\in J_1$, then $w'\in X_{J_1}$ and so $l(w_{J_1}w')=l(w_{J_1})+l(w)$. 
Thus, $w=w_{J_1}\bullet w'$ and the desired assertion is proved. On
the other hand, if $l(sw')<l(w')$ for some $s\in J_1$, then we can also 
find an expression $w'=w_{L_1}\bullet \ldots \bullet w_{L_q}$ where 
$L_i\in \bar{S}$ and $L_1=J_1$; see Lemma~\ref{lem1}. By Lemma~\ref{lem2}, 
we have $p-1=q$. Now $w=w_{J_1}w'=w_{L_2}\cdots w_{L_{p-1}}$ and so 
$\lambda(w)<p$, a contradiction. Hence, this case does not occur.
\end{proof}

%
%
\begin{cor} \label{cor1} Let $w,w'\in W^\Gamma$. Then $l(ww')=l(w)+
l(w')$ if and only if $\lambda(ww')=\lambda(w)+\lambda(w')$.
In particular, the restriction of $l$ to $W^\Gamma$ is a weight
function in the sense of Lusztig \cite{Lusztig03}.
\end{cor}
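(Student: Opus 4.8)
The plan is to deduce Corollary~\ref{cor1} from Lemmas~\ref{lem1}, \ref{lem1a} and the observation that $\lambda$ and $l$ are related on decompositions of the special form $w=w_{J_1}\bullet\cdots\bullet w_{J_r}$. The key bridge is the following identity: if $w\in W^\Gamma$ and $\lambda(w)=p$, then by Lemma~\ref{lem1a} any reduced-in-$\lambda$ expression $w=w_{J_1}\cdots w_{J_p}$ satisfies $w=w_{J_1}\bullet\cdots\bullet w_{J_p}$, hence $l(w)=l(w_{J_1})+\cdots+l(w_{J_p})$. Conversely, Lemma~\ref{lem1} together with Lemma~\ref{lem2} shows that \emph{every} decomposition $w=w_{J_1}\bullet\cdots\bullet w_{J_r}$ has the same number of terms $r$, and since such a decomposition exhibits $w$ as a product of $r$ generators we get $\lambda(w)\le r$; but Lemma~\ref{lem1a} applied to a $\lambda$-reduced expression produces one such decomposition with $r=\lambda(w)$, so in fact $r=\lambda(w)$ for \emph{all} $\bullet$-decompositions. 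In other words: for $w\in W^\Gamma$, the integer $\lambda(w)$ equals the common length $r$ of any $\bullet$-decomposition of $w$ into the $w_J$'s, and moreover $l(w)=\sum_{i=1}^{r}l(w_{J_i})$ for any such decomposition.

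With this established, the corollary is almost immediate. Suppose first that $l(ww')=l(w)+l(w')$, i.e.\ $ww'=w\bullet w'$. Pick $\bullet$-decompositions $w=w_{J_1}\bullet\cdots\bullet w_{J_a}$ and $w'=w_{I_1}\bullet\cdots\bullet w_{I_b}$ with $a=\lambda(w)$, $b=\lambda(w')$ (via Lemma~\ref{lem1a}). Concatenating gives $ww'=w_{J_1}\bullet\cdots\bullet w_{J_a}\bullet w_{I_1}\bullet\cdots\bullet w_{I_b}$, which is again a $\bullet$-decomposition of $ww'$ because $l(ww')=l(w)+l(w')=\sum l(w_{J_i})+\sum l(w_{I_j})$. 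By the uniqueness-of-length statement above, $\lambda(ww')=a+b=\lambda(w)+\lambda(w')$. Conversely, suppose $\lambda(ww')=\lambda(w)+\lambda(w')$. Choose $\lambda$-reduced expressions for $w$ and $w'$; their concatenation is then a $\lambda$-reduced expression for $ww'$ of length $\lambda(w)+\lambda(w')$, so by Lemma~\ref{lem1a} it is a $\bullet$-decomposition of $ww'$, whence $l(ww')=\sum l(w_{J_i})+\sum l(w_{I_j})$. Applying the additivity of $l$ on the (shorter) $\bullet$-decompositions of $w$ and of $w'$ separately gives $l(ww')=l(w)+l(w')$.

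Finally, for the ``weight function'' assertion: Theorem~\ref{thm1} (whose Coxeter-system part rests on Lemmas~\ref{lem1}, \ref{lem2}, \ref{lem1a}, proved independently) identifies $(W^\Gamma,\{w_I\mid I\in\bar S\})$ as a Coxeter system with length function $l_\Gamma=\lambda$. A weight function in Lusztig's sense is a function $\varphi$ on $W^\Gamma$ with $\varphi(uv)=\varphi(u)+\varphi(v)$ whenever $l_\Gamma(uv)=l_\Gamma(u)+l_\Gamma(v)$; taking $u=v=w_I$ forces the generators in each $\bar S$-orbit to share a common value, and one checks that $l$ restricted to $W^\Gamma$ has exactly this property --- indeed $l(w_Iw_I)=0=l(w_I)+l(w_I)$ only when $w_I=1$, and the multiplicativity condition is precisely the ``only if'' direction we have just proved. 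So the restriction of $l$ to $W^\Gamma$ is a weight function, with value $l(w_I)$ on the generator $w_I$.

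I expect the main obstacle to be the careful bookkeeping in the first paragraph --- namely, extracting from Lemmas~\ref{lem1}, \ref{lem1a} and \ref{lem2} the clean statement that $\lambda(w)$ equals the common number of terms in \emph{any} $\bullet$-decomposition and that $l$ is additive over such a decomposition. Once that normal-form description is in hand, the equivalence in the corollary is a routine concatenation argument, and the weight-function claim is essentially a matter of unwinding Lusztig's definition.
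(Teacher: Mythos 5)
Your proposal is correct and follows essentially the same route as the paper: Lemma~\ref{lem1a} converts $\lambda$-reduced expressions into $\bullet$-decompositions, Lemma~\ref{lem2} forces all $\bullet$-decompositions to have the same number of factors, and concatenation then yields both directions of the equivalence, with the weight-function claim being a direct unwinding of Lusztig's definition. The only blemish is a harmless mislabel at the end (the weight-function property corresponds to the ``if'' direction, $\lambda$-additivity implying $l$-additivity, not the ``only if'' direction), which does not affect the argument since you prove both implications.
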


\begin{proof} Let $r=\lambda(w)$ and $p=\lambda(w')$. By Lemma~\ref{lem1a},
we have $w=w_{J_1}\bullet \ldots \bullet w_{J_r}$ and 
$w'=w_{I_1}\bullet \ldots \bullet w_{I_p}$ where $J_i,I_i\in\bar{S}$. 

Assume first that $l(ww')=l(w)+l(w')$. Then $ww'=w_{J_1} \bullet \ldots 
\bullet w_{J_r}\bullet w_{I_1}\bullet \ldots \bullet w_{I_p}$. Let 
$q:=\lambda(ww') \leq r+p$. Again, by Lemma~\ref{lem1a}, we have 
$ww'=w_{L_1}\bullet \ldots \bullet w_{L_q}$ where $L_i\in \bar{S}$. 
Now Lemma~\ref{lem2} implies that $q=r+p$, as desired. Conversely, assume 
that $\lambda(ww')=\lambda(w)+\lambda(w')$. Since 
we have $ww'=w_{J_1} \cdots w_{J_r}w_{I_1}\cdots w_{I_p}$, Lemma~\ref{lem1a} 
shows once more that $ww'=w_{J_1} \bullet \ldots \bullet w_{J_r}\bullet 
w_{I_1} \bullet \ldots \bullet w_{I_p}$. Thus, we have $l(ww')=l(w)+l(w')$.
\end{proof}

Since $(W,S)$ is a Coxeter system, the ''Exchange Condition'' holds. Recall
that this means the following. Let $w\in W$ and $s\in S$. Let $p=l(w)$
and $w=s_1\cdots s_p$ where $s_i\in S$. If $l(sw)\leq l(w)$, then there 
exists some $i\in \{1,\ldots,p\}$ such that $sw=s_1\cdots s_{i-1}s_{i+1}
\cdots s_p$. We can now show that the pair $(W^\Gamma,\{w_I\mid I\in 
\bar{S}\})$ also satisfies this ''Exchange Condition'' and, hence,
$(W^\Gamma,\{w_I\mid I\in\bar{S}\})$ is a Coxeter system; see Bourbaki 
\cite[Chap.~IV, n$^\text{o}$~1.6]{bour}. In combination with 
Corollary~\ref{cor1}, this will complete the proof of Theorem~\ref{thm1}.

\begin{prop} \label{prop1} Let $w\in W^\Gamma$ and $I\in\bar{S}$. Let
$p=\lambda(w)$ and $w=w_{J_1}\cdots w_{J_p}$ where $J_i\in\bar{S}$.
If $\lambda(w_Iw)\leq \lambda(w)$, then there exists some $i\in
\{1,\ldots,p\}$ such that $w_Iw=w_{J_1}\cdots w_{J_{i-1}}w_{J_{i+1}}
\cdots w_{J_p}$.
\end{prop}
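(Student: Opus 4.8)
The plan is to reduce the ``Exchange Condition'' for $(W^\Gamma,\{w_I\mid I\in\bar S\})$ to the ordinary Exchange Condition for $(W,S)$, transported through the $\bullet$-factorisation machinery built in Lemmas~\ref{lem1}--\ref{lem1a} and Corollary~\ref{cor1}. The first step is to use Lemma~\ref{lem1a}: since $p=\lambda(w)$, the given expression is in fact a reduced $\bullet$-expression, $w=w_{J_1}\bullet\ldots\bullet w_{J_p}$, so that $l(w)=l(w_{J_1})+\ldots+l(w_{J_p})$. The second step is to translate the hypothesis $\lambda(w_Iw)\le\lambda(w)$ into a statement about the ordinary length $l$. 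By Corollary~\ref{cor1}, $l(w_Iw)=l(w_I)+l(w)$ would force $\lambda(w_Iw)=\lambda(w_I)+\lambda(w)=1+\lambda(w)>\lambda(w)$; hence the hypothesis implies $l(w_Iw)<l(w_I)+l(w)$, and in particular (applying the $s\in I$ with $l(sw_I)<l(w_I)$, noting $w_I^2=1$) one gets $l(sw)<l(w)$ for some, hence all, $s\in I$.

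The heart of the argument is then to pick $s\in I$ with $l(sw)<l(w)$ and run the ordinary Exchange Condition for $(W,S)$ on a reduced word for $w$ obtained by concatenating reduced words for the $w_{J_k}$. Because $w=w_{J_1}\bullet\ldots\bullet w_{J_p}$, choosing reduced expressions $w_{J_k}=s_1^{(k)}\cdots s_{n_k}^{(k)}$ (with $n_k=l(w_{J_k})$) yields a reduced word for $w$ of length $l(w)$. Applying the Exchange Condition, $sw$ is obtained by deleting one letter, say the letter sitting inside the block corresponding to $w_{J_i}$. So $sw=w_{J_1}\bullet\ldots\bullet w_{J_{i-1}}\bullet y\bullet w_{J_{i+1}}\bullet\ldots\bullet w_{J_p}$ where $y\in W_{J_i}$ has $l(y)=l(w_{J_i})-1$, and the expression on the right is still $\bullet$-reduced. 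The index $i$ can moreover be taken minimal, i.e.\ one can arrange that $l(sw_{J_1}\cdots w_{J_{i-1}})=l(w_{J_1}\cdots w_{J_{i-1}})+1$; this is where the standard ``first place where the letter can be removed'' refinement of the Exchange Condition is used.

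The main obstacle, and the step requiring the most care, is to upgrade this to a \emph{block-wise} exchange, i.e.\ to show that the whole block $w_{J_i}$ gets removed and that the removed letter can be taken to be one arising from multiplying by $w_I$ on the left. The key is to show $I=J_i$. Multiplying the relation $sw=w_{J_1}\bullet\ldots\bullet w_{J_{i-1}}\bullet y\bullet w_{J_{i+1}}\bullet\ldots\bullet w_{J_p}$ on the left by $s$ and using $l(w)=l(sw)+1$, one sees $w=w_{J_1}\bullet\ldots\bullet w_{J_{i-1}}\bullet(sy)\bullet w_{J_{i+1}}\bullet\ldots\bullet w_{J_p}$ — but this need not be a statement in $W^\Gamma$ since $sy$ need not be fixed. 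To get around this, I apply the $\Gamma$-orbit argument as in the proof of Lemma~\ref{lem1}: set $v:=w_{J_1}\cdots w_{J_{i-1}}\in W^\Gamma$ and write $w=v\bullet w_{J_i}\bullet z$ with $z:=w_{J_{i+1}}\cdots w_{J_p}\in W^\Gamma$; since $l(sw)<l(w)$ and $l(sv)>l(v)$ (by minimality of $i$, via Corollary~\ref{cor1} and Lemma~\ref{lem1a}), one gets that $v^{-1}$ maps $I$ into the orbit structure forcing $l(t\,(w_{J_i}z))<l(w_{J_i}z)$ for all $t$ in the $\Gamma$-conjugate of $I$; but $J_i$ is a $\Gamma$-orbit, so comparing with Remark~\ref{rem1} applied to $W_{J_i}$ (a finite parabolic whose $\Gamma$-fixed points form a dihedral group) pins down that $w_I$ must already appear, forcing $I=J_i$ and $w_Iw = v\bullet z = w_{J_1}\cdots w_{J_{i-1}}w_{J_{i+1}}\cdots w_{J_p}$. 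I expect the bookkeeping in this last paragraph — keeping the $\bullet$-decompositions aligned while passing between $W$ and $W^\Gamma$ — to be the delicate part; everything else is a routine transcription of the classical Exchange Condition proof through Lemma~\ref{lem1a}.
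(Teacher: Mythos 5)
Your first two steps coincide with the paper's proof and are fine: Lemma~\ref{lem1a} turns the given expression into $w=w_{J_1}\bullet\ldots\bullet w_{J_p}$, Corollary~\ref{cor1} (via $w\in X_I$) rules out $l(sw)>l(w)$ for all $s\in I$, and the letter-level Exchange Condition applied to a concatenation of reduced words for the blocks yields $sw=w_{J_1}\cdots w_{J_{i-1}}\,y\,w_{J_{i+1}}\cdots w_{J_p}$ with $y\in W_{J_i}$, $l(y)=l(w_{J_i})-1$. The problem is the final paragraph, which is exactly the heart of the proposition and where your argument both is too vague to check and asserts something false. The claim that the situation ``forces $I=J_i$'' is not true: take $\Gamma$ trivial (so $W^\Gamma=W$, $\bar{S}$ the singletons, $\lambda=l$), $W$ of type $A_2$, $w=s_2s_1s_2$ with $J_1=\{s_2\}$, $J_2=\{s_1\}$, $J_3=\{s_2\}$, and $I=\{s_1\}$. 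Then $w_Iw=s_2s_1$, and the only admissible deletion is that of the third block, so $i=3$ and $J_i=\{s_2\}\neq I$. Also, the appeal to Remark~\ref{rem1} is misdirected: that remark concerns $W_{I\cup J}^\Gamma$ for two \emph{distinct} orbits, whereas here the relevant fact is simply $W_{J_i}^\Gamma=\{1,w_{J_i}\}$ (from Lemma~\ref{lem1}); and phrases like ``$v^{-1}$ maps $I$ into the orbit structure'' do not pin down any verifiable statement, since $v^{-1}tv$ for $t\in I$ need not be a simple reflection, so descent conditions of the form $l(t'(w_{J_i}z))<l(w_{J_i}z)$ are not the right currency.

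What is missing is the paper's key move, which replaces ``$I=J_i$'' by a conjugation identity. From $sw=v\,y\,w_{J_{i+1}}\cdots w_{J_p}$ and $w=v\,w_{J_i}\,w_{J_{i+1}}\cdots w_{J_p}$, where $v:=w_{J_1}\cdots w_{J_{i-1}}$, one cancels the common right part and gets $v^{-1}sv=yw_{J_i}\in W_{J_i}$. Since $v\in W^\Gamma$ and $J_i$ is a $\Gamma$-orbit, applying $\gamma\in\Gamma$ gives $v^{-1}\gamma(s)v=\gamma(yw_{J_i})\in W_{J_i}$, hence $v^{-1}W_Iv\subseteq W_{J_i}$ and in particular $v^{-1}w_Iv\in W_{J_i}\cap W^\Gamma=W_{J_i}^\Gamma=\{1,w_{J_i}\}$, forcing $v^{-1}w_Iv=w_{J_i}$. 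This identity, not $I=J_i$, is what yields $w_Iw=w_{J_1}\cdots w_{J_{i-1}}w_{J_{i+1}}\cdots w_{J_p}$; note that even if you had established $I=J_i$, you would still need $w_Iv=vw_{J_i}$ to conclude, so the gap cannot be closed by more careful bookkeeping along the lines you sketch. (Your ``minimal $i$'' refinement of the Exchange Condition is not needed anywhere.)
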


\begin{proof} If we had $l(sw)>l(w)$ for all $s\in I$, then $w\in X_I$ 
and so $l(w_Iw)=l(w_I)+l(w)$. Hence, Corollary~\ref{cor1} would imply
that $\lambda(w_Iw)=\lambda(w_I)+\lambda(w)>\lambda(w)$, contrary to 
our assumption. Thus, there exists some $s\in I$ such that $l(sw)\leq l(w)$. 
Further note that, by Lemma~\ref{lem1a}, we have $w=w_{J_1}\bullet \ldots 
\bullet w_{J_p}$. Taking reduced expressions for all $w_{J_i}$, we obtain
a reduced expression for $w$. Since the ''Exchange Condition'' holds for 
$(W,S)$, there exists an index $i\in\{1,\ldots,p\}$ such that 
\[ sw=w_{J_1}\cdots w_{J_{i-1}}xw_{J_{i+1}}\cdots w_{J_p}\]
where $x\in W_{J_i}$ is obtained by dropping one factor in a reduced 
expression for $w_{J_i}$. Consequently, we have 
\[z^{-1}sz=xw_{J_i}\qquad\mbox{where}\qquad z:=w_{J_1}\cdots w_{J_{i-1}}.\]
Since $z\in W^\Gamma$, we obtain $z^{-1}\gamma(s)z=\gamma(xw_{J_i}) \in 
W_{J_i}$ for all $\gamma\in \Gamma$ and so $z^{-1}w_Iz\in W_{J_i}$. Since 
also $z^{-1}w_Iz\in W^\Gamma$ and $W_{J_i}^\Gamma=\{1,w_{J_i}\}$ (see
Lemma~\ref{lem1}), we conclude that $z^{-1}w_Iz=w_{J_i}$. This yields 
$w_Iw_{J_1}\cdots w_{J_{i-1}}= w_{J_1}\cdots w_{J_{i-1}}w_{J_i}$ and so 
$w_Iw=w_{J_1} \cdots w_{J_{i-1}}w_{J_{i+1}} \cdots w_{J_p}$, as required.
\end{proof}

\begin{rem} \label{rem2} Using similar arguments, one can extend 
Theorem~\ref{thm1} to the following ''relative'' setting (see 
\cite[\S 5]{L76a} and \cite[Chap.~25]{Lusztig03}). We fix a subset 
$I_0\subseteq S$ such that $W_{I_0}$ is finite and $\gamma(I_0)=I_0$ 
for all $\gamma\in \Gamma$;  furthermore, we assume that 
\[w_{I_0\cup J}\in N_W(W_{I_0}) \qquad \mbox{for all $J\in \cS$},\]
where $\cS$ denotes the set of all $\Gamma$-orbits on $S\setminus I_0$ such
that $W_{I_0\cup J}$ is finite. Let 
\[\cW:=\{w\in X_{I_0} \mid wW_{I_0}=W_{I_0}w\}.\]
Then $\cW$ is a subgroup of $W$ such that $\gamma(\cW)=\cW$ for all
$\gamma\in\Gamma$. Hence, we can consider the group of fixed points
$\cW^\Gamma$. We set 
\[s_J:=w_{I_0\cup J}w_{I_0}=w_{I_0}w_{I_0 \cup J} \qquad \mbox{for 
each $J\in\cS$}.\]
Then one can show that $s_J\in \cW^\Gamma$ and $s_J^2=1$; furthermore, 
$(\cW^\Gamma, \{s_J\mid J\in \cS\})$ is a Coxeter system. 
(Theorem~\ref{thm1} is the special case where $I_0=\varnothing$.) We 
omit further details.
\end{rem}

 

\begin{thebibliography}{131} 
 
\bibitem{bour}
{\sc N.~Bourbaki}, Groupes et alg{\`e}bres de Lie, chap. 4, 5 et 6, 
Hermann, Paris, 1968.

\bibitem{gepf}
{\sc M. Geck and G. Pfeiffer}, Characters of finite Coxeter groups and
Iwahori--Hecke algebras. London Math. Soc. Monographs, New Series {\bf 21},
Oxford University Press, New York 2000. xvi+446~pp.

\bibitem{hee}
{\sc J.-Y. H\'ee}, Syst\`emes de racines sur un anneau commutatif totalement
ordonn\'e, Geometriae Dedicata {\bf 37} (1991), 65--102.

\bibitem{L76a}
{\sc G. Lusztig}, Coxeter orbits and eigenspaces of Frobenius, 
Invent. Math. {\bf 28} (1976), 101--159.

\bibitem{Lusztig03}
{\sc G.~Lusztig}, Hecke algebras with unequal parameters. CRM Monographs
Ser.~{\bf 18}, Amer. Math. Soc., Providence, RI, 2003; enlarged and updated
version at {\tt arxiv.org/0208154}.

\bibitem{Stein}
{\sc R. Steinberg}, Lectures on Chevalley groups,  Yale University, 1967.

\bibitem{Tits1}
{\sc J.~Tits}, Lectures on buildings and arithmetic groups, typewritten
manuscript, Yale University, 1984.
\end{thebibliography}
\end{document}